\theoremstyle{plain}
\newtheorem{theorem}{Theorem}
\newtheorem{lemma}[theorem]{Lemma}
\newcommand{\lem}[1]{Lemma~\ref{lem:#1}}
\newcommand{\thm}[1]{Theorem~\ref{thm:#1}}
\newcommand{\sect}[1]{Section~\ref{sec:#1}}
\newcommand{\sys}[1]{System~\ref{sys:#1}}
\newcommand{\defn}{\emph}
\newcommand{\N}{\mathbb{N}}
\newcommand{\Z}{\mathbb{Z}}
\newcommand{\R}{\mathbb{R}}
\newcommand{\column}[1]{c^{(#1)}}
\newcommand{\vvdots}[1]{\mathrel{\makebox[\widthof{\ensuremath{#1}}]{\vdots}}}
\title{Partition regularity of a system \\of De and Hindman}
\author{Ben Barber\footnote{Department of Pure Mathematics and Mathematical Statistics, Centre for Mathematical Sciences, Wilberforce Road, Cambridge, CB3 0WB, UK.  {\tt b.a.barber@dpmms.cam.ac.uk}}}
\begin{document}

\maketitle

\begin{abstract}
We prove that a certain matrix, which is not image partition regular over $\R$ near zero, is image partition regular over $\N$.  This answers a question of De and Hindman.
\end{abstract}

\section{Introduction}

Let $A$ be an integer matrix with only finitely many non-zero entries in each row.  We call $A$ \defn{kernel partition regular} (\defn{over $\N$}) if, whenever $\N$ is finitely coloured, the system of linear equations $Ax=0$ has a monochromatic solution; that is, there is a vector $x$ with entries in $\N$ such that $Ax = 0$ and each entry of $x$ has the same colour. We call $A$ \defn{image partition regular} (\defn{over $\N$}) if, whenever $\N$ is finitely coloured, there is a vector $x$ with entries in $\N$ such that each entry of $Ax$ is in $\N$ and has the same colour.  We also say that the system of equations $Ax = 0$ or the system of expressions $Ax$ is \defn{partition regular}.

The finite partition regular systems of equations were characterised by Rado.  Let $A$ be an $m \times n$ matrix and let $\column{1}, \dotsc, \column{n}$ be the columns of $A$.  Then $A$ has the \defn{columns property} if there is a partition $[n] = I_1 \cup I_2 \cup \dotsb \cup I_t$ of the columns of $A$ such that $\sum_{i \in I_1} \column{i} = 0$, and, for each $s$,
\begin{equation*}
 \sum_{i \in I_s} \column{i} \in \langle \column{j} : j \in I_1 \cup \dotsb \cup I_{s-1}\rangle,
\end{equation*}
where $\langle \cdot \rangle$ denotes (rational) linear span.

\begin{theorem}[\cite{Rado}]
 A finite matrix $A$ with integer coefficients is kernel partition regular if and only if it has the columns property.
\end{theorem}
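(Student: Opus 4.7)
The plan is to prove both directions of Rado's theorem separately. For the necessity direction (kernel partition regular implies the columns property), I would use a colouring based on base-$p$ representations for a large prime $p$. Specifically, write each $n \in \N$ as $n = p^k m$ with $p \nmid m$ and colour $n$ by the pair consisting of the residue of $m$ modulo $p$ and the residue of $k$ modulo some fixed integer. Any monochromatic solution $x$ to $Ax = 0$ then has entries whose $p$-adic valuations $k_i$ take finitely many values and whose leading $p$-adic digits all coincide. Grouping the columns of $A$ into blocks $I_s$ according to the level of the corresponding $x_i$ and expanding $Ax = 0$ level by level yields, after dividing by the appropriate power of $p$, relations of the form $\sum_{i \in I_s} \column{i} \in \langle \column{j} : j \in I_1 \cup \cdots \cup I_{s-1} \rangle$, provided $p$ is chosen large enough relative to the entries of $A$ to prevent modular cancellations from faking a relation.

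For the sufficiency direction I would route through Deuber's theorem on $(m, p, c)$-sets. An $(m, p, c)$-set is the set of values of expressions of the form $c x_j + \sum_{i > j} \lambda_i x_i$ as $j$ ranges over $[m]$ and $|\lambda_i| \leq p$, for generators $x_1, \ldots, x_m \in \N$. The main combinatorial ingredient is that for every $m, p, c$ there exist $M, P, C$ such that any finitely coloured $(M, P, C)$-set contains a monochromatic $(m, p, c)$-set; iterating and using van der Waerden's theorem as the base case shows that every finite colouring of $\N$ admits a monochromatic $(m, p, c)$-set of arbitrary parameters. Granted this, if $A$ has the columns property with partition $I_1 \cup \cdots \cup I_t$, then choosing $m, p, c$ large enough in terms of $A$ and picking generators $x_1, \ldots, x_m$ of a monochromatic $(m, p, c)$-set, the columns-property relations let one write down a vector $x$ whose coordinates lie in that set and satisfy $Ax = 0$; the role of the $t$ layers of the columns partition matches the $m$ layers of the $(m, p, c)$-set.

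The main obstacle is the Deuber step: proving partition regularity of $(m, p, c)$-sets requires a genuinely delicate double induction (on $m$ and on the number of colours) and carries essentially all of the Ramsey-theoretic content of the theorem. Once it is in hand, producing solutions from the columns property is a linear algebra exercise, and the necessity direction is a clean application of the $p$-adic colouring above. I would expect to spend most of the effort formulating the induction statement for Deuber's theorem so that the hypothesis stays strong enough to feed back into the inductive step.
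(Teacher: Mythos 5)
The paper does not prove this theorem: it is stated as a classical result and attributed to Rado via the citation \cite{Rado}, so there is no internal proof to compare yours against. Judged on its own, your outline is the standard modern proof and is essentially sound, with two points to tighten. For necessity, your colouring is the right one, but the digit you want is the \emph{lowest-order} nonzero base-$p$ digit (the digit at position $k$ when $n = p^k m$ with $p \nmid m$), not the leading digit, and the auxiliary colour $k \bmod K$ is harmless but unnecessary. More substantively, you should justify why span membership modulo a \emph{single} large prime implies rational span membership: a failure of $\sum_{i \in I_s} \column{i} \in \langle \column{j} : j \in I_1 \cup \dots \cup I_{s-1} \rangle$ over $\mathbb{Q}$ is witnessed by an integer vector orthogonal to the earlier columns whose entries are bounded, via Cramer's rule, in terms of $A$ alone, so a prime exceeding that bound cannot be fooled; the alternative, and the way the argument is usually written, is to run the colouring for infinitely many primes and pigeonhole over the finitely many partitions of the columns. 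For sufficiency, the route through Deuber's $(m,p,c)$-sets is correct: take $m = t$, take $c$ to clear the denominators of the rational coefficients in the columns-property relations and $p$ to bound the resulting integer coefficients, so that every $(m,p,c)$-set contains a solution of $Ax = 0$ (positivity is automatic since $(m,p,c)$-sets live in $\N$), and then the partition regularity of $(m,p,c)$-sets, proved by the double induction you describe with van der Waerden's theorem at the base, finishes the proof. You are right that essentially all the Ramsey-theoretic difficulty sits in that step; note only that this is Deuber's later argument rather than Rado's original row-by-row induction, though either serves here.
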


The finite image partition regular systems were characterised by Hindman and Leader \cite{finite-image}.

In the infinite case even examples of partition regular systems are hard to come by: see \cite{bhl} for an overview of what is known.  De and Hindman \cite[Q3.12]{De-Hindman} asked whether the following matrix was image partition regular over $\N$.
\[
\begin{pmatrix}
1 &  &  &  &  &  &  &   \cdots \\
 & 1 & 1 &  &  &  &  &   \cdots \\
2 & 1 &  &  &  &  &  &  \cdots \\
2 &  & 1 &  &  &  &  &   \cdots \\
 &  &  & 1 & 1 & 1 & 1 &   \cdots \\
4 &  &  & 1 &  &  &  &   \cdots \\
4 &  &  &  & 1 &  &  &   \cdots \\
4 &  &  &  &  & 1 &  &   \cdots \\
4 &  &  &  &  &  & 1 &   \cdots \\
\vdots & \vdots & \vdots & \vdots & \vdots & \vdots & \vdots & \ddots \\
\end{pmatrix}
\]
where we have omitted zeroes to make the block structure of the matrix more apparent.  This matrix corresponds to the following system of linear expressions.

\begin{align*}
 x_{21} & + x_{22}                      &   x_{21} & + 2 y       & y,  \\
 &&  x_{22} & + 2 y       &                                        &    \\
                                                                      \\
 x_{41} & + x_{42} + x_{43} + x_{44}    &   x_{41} & + 4 y       &    \\
                &                        &   x_{42} & + 4 y       &    \\
                &                        &   x_{43} & + 4 y       &    \\
                &                        &   x_{44} & + 4 y       &    \\
          &   &        & \vvdots{+}                    &    \\
 x_{2^n1} & + \cdots + x_{2^n2^n}             &   x_{2^n1} & + 2^n y &    \\
                                &        &          & \vvdots{+}  &    \\
                       &           &   x_{2^n2^n} & + 2^n y &    \\
          &   &        & \vvdots{+}                    &
\end{align*}

A matrix $A$ is called \defn{image partition regular over $\R$ near zero} if, for every $\delta > 0$, whenever $(-\delta, \delta)$ is finitely coloured, there is a vector $x$ with entries in $\R\setminus \{0\}$ such that each entry of $Ax$ is in $(-\delta, \delta)$ and has the same colour.  De and Hindman sought a matrix that was image partition regular over $\N$ but not image partition regular over $\R$ near zero.  It is easy to show that the above matrix is not image partition regular over $\R$ near zero, so showing that it is image partition regular over $\N$ would provide an example.

The main result of this paper is that De and Hindman's matrix is image partition regular over $\N$.

\begin{theorem}\label{thm:main}
For any sequence $(a_n)$ of integer coefficients, the system of expressions

\begin{align}\label{sys:new}
 x_{11} &                   &  x_{11} & + a_1 y  & y, \nonumber \\
                                                      \nonumber \\
 x_{21} & + x_{22}          &  x_{21} & + a_2 y  &    \nonumber \\
 &        &  x_{22} & + a_2 y                         \nonumber \\
                                                      \\
 x_{31} & + x_{32} + x_{33} &  x_{31} & + a_3 y  &    \nonumber \\
        &                   &  x_{32} & + a_3 y  &    \nonumber \\
        &                   &  x_{33} & + a_3 y  &    \nonumber \\
        &         &       & \vvdots{+} &    \nonumber
\end{align}

is partition regular.
\end{theorem}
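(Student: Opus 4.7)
My plan is a diagonal reduction followed by an ultrafilter argument in the Stone--\v{C}ech compactification $\beta\N$. Setting $x_{n,1} = x_{n,2} = \cdots = x_{n,n} =: x_n$ within each row, the $n$-th block collapses to requiring that $n x_n$ and $x_n + a_n y$ agree in color with $y$. So it suffices to find $y \in \N$ and a sequence $(x_n)_{n \geq 1}$ in $\N$ such that $y$ and, for every $n$, both $n x_n$ and $x_n + a_n y$ lie in a common color class $C$; equivalently, $y \in C$ and $(C - a_n y) \cap \{x \in \N : n x \in C\} \neq \emptyset$ for every $n$.

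To obtain a color class that is simultaneously additively and multiplicatively rich, I would consider $\mathcal{I} := \bigcap_{k \geq 1} \overline{k \N} \subseteq \beta \N$. This is nonempty (since $\mathrm{lcm}(1, \ldots, m)\N$ is always nonempty), compact, and closed under $+$, so it contains an additive idempotent $p$; then $p + p = p$ and $k \N \in p$ for every $k$. Let $C$ be the color class with $C \in p$. Then $C^\star := \{z \in C : C - z \in p\} \in p$, so $C$ is ``IP-rich'', and $C \cap k\N \in p$ for every $k$.

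Inside $C$ I would apply Hindman's theorem through $p$ to build a sequence $(z_k)_{k \geq 1}$ with $z_k \in C \cap (k!)\N$ and with all finite sums of distinct terms again in $C$; such a sequence exists by a greedy choice, because at each stage $C \cap (k!)\N$ and the translates $C - F$ (for $F$ the finite set of previously chosen sums) all lie in $p$. Then take $y := z_1$. For each $n$, the aim is to realize $x_n$ as a particular finite sum of later $z_k$'s so that both $n x_n$ (using the divisibility of the $z_k$) and $x_n + a_n y$ (using a compensating representation of $a_n y$) are finite sums of the $z_k$'s, hence in $C$.

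The main obstacle is to achieve both $n x_n \in C$ and $x_n + a_n y \in C$ simultaneously for every $n$ from a single global $y$. Additive idempotency handles sums and the membership $k \N \in p$ handles divisibility, but coupling them requires delicate bookkeeping; I expect the resolution via a central-sets-theorem-style argument that builds $(z_k)$ with enough joint additive and multiplicative structure to realize both conditions uniformly in $n$.
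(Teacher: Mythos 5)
Your opening ``diagonal reduction'' is not merely unjustified --- it reduces the theorem to a strictly stronger statement that is false. Setting $x_{n1} = \dotsb = x_{nn} = x_n$, your collapsed row $n$ demands that $y$, $u = n x_n$ and $v = x_n + a_n y$ all lie in one colour class, and eliminating $x_n$ gives the single linear equation $u - nv + n a_n y = 0$ in three monochromatic unknowns. By Rado's theorem, a single equation $\sum_i c_i w_i = 0$ is partition regular over $\N$ if and only if some nonempty subset of its coefficients sums to zero. Take $n = 2$ and $a_2 = 2$ (exactly the De--Hindman case $a_n = n$ that the theorem is designed to cover): the coefficients are $1, -2, 4$, whose nonempty subset sums are $1, -2, 4, -1, 5, 2, 3$ --- never zero. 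So there is a finite colouring of $\N$ admitting no monochromatic triple $(u, v, y)$, hence no solution of your reduced system, even though the original system is partition regular. The distinctness of the $x_{ni}$ is precisely what the paper's proof exploits: with $n$ \emph{free} summands drawn from a dense colour class $A$, \lem{new} gives $A_{>t} - nA_{>t} \supseteq m \cdot \Z$ once $n \geq 2/d(A)$, so $-a_n y$ is representable as $z_n - \tilde x_{n1} - \dotsb - \tilde x_{nn}$ with all of $z_n, \tilde x_{ni} \in A_{>a_n y}$ (which also yields positivity of $x_{ni} = \tilde x_{ni} - a_n y$), while the finitely many rows with $n < 2/d(A)$ are handled by Rado's theorem applied to the \emph{uncollapsed} block, which does have the columns property. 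Your collapse replaces the $n$-fold sumset $nA$ by the sparse dilate $n \cdot A$, destroying exactly this density leverage.

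The ultrafilter machinery would not rescue the argument even where it is sound. An additive idempotent $p \in \bigcap_{k} \overline{k\N}$ controls finite sums of the $z_k$ and divisibility of chosen elements, but ``$n x_n \in C$'' is a dilation condition: if you arrange $n x_n = u$ with $u$ a finite sum of the $z_k$ divisible by $n$, then the companion requirement becomes $u/n + a_n y \in C$, and $p$ gives no information whatsoever about $u/n$, since finite-sums structure is not preserved under division by $n$ (nor is $n \cdot \mathrm{FS}$ contained in $C$; Hindman's theorem yields sums of \emph{distinct} terms, not $n$-fold repetitions). Your closing sentence concedes that this coupling is an unresolved ``main obstacle'' to be fixed by an unspecified central-sets-style argument; the counterexample above shows it is unresolvable, because the statement you reduced to is false. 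Any correct proof must retain the $n$ independent variables in row $n$ --- as the paper does --- rather than identify them.
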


Taking $a_n = n$ implies that De and Hindman's matrix is image partition regular.

Barber, Hindman and Leader \cite{bhl} recently found a different matrix that is image partition regular but not image partition regular over $\R$ near zero.  Their argument proceeded via the following result on kernel partition regularity.

\begin{theorem}[\cite{bhl}] \label{thm:bhl}
For any sequence $(a_n)$ of integer coefficients, the system of equations
\begin{equation*}
 \begin{split}
                  x_{11} + a_1 y & = z_1      \\
         x_{21} + x_{22} + a_2 y & = z_2      \\
                                 & \vvdots{=} \\
x_{n1} + \cdots + x_{nn} + a_n y & = z_n      \\
                                 & \vvdots{=}
 \end{split}
\end{equation*}
is partition regular.
\end{theorem}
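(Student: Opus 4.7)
Since each $z_n$ is merely an abbreviation for $\sum_{i=1}^{n} x_{n,i} + a_n y$, the kernel partition regularity of the stated system is equivalent to the following image-type statement: for every finite colouring of $\N$ there must exist $y$ and elements $x_{n,i}\in\N$ such that $y$, every $x_{n,i}$, and every sum $\sum_i x_{n,i}+a_n y$ share a single colour. I would prove this reformulation directly, working in the Stone--\v{C}ech compactification $(\beta\N,+)$.

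Fix a finite colouring of $\N$ and pick a minimal idempotent $p\in(\beta\N,+)$. One colour class $C$ must lie in $p$, so $C$ is a central subset of $\N$. My aim is to build, inside $C$, a single sequence $(w_k)_{k\geq 1}$ such that $y:=w_1$ and $x_{n,i}:=w_{1+i}$ give a monochromatic solution; that is, $w_k\in C$ for every $k$ and
\[
 a_n w_1 + w_2+w_3+\cdots+w_{n+1}\in C\quad\text{for every }n\geq 1.
\]

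Producing such a sequence is a Milliken--Taylor-type problem with one coefficient vector $(a_n,1,1,\ldots,1)$ for each $n$, and the natural tool is the strong form of the Central Sets Theorem (the version due to De, Hindman and Strauss which handles countably many sequences at once). Applied to the central set $C$ together with the countable family of sequences $f_n(t)=a_n t$, it yields sequences $(\alpha_m)\subseteq\N$ and pairwise disjoint finite sets $H_m\subseteq\N$ (with $\max H_m<\min H_{m+1}$) such that every interleaved sum $\sum_{m\in F}\bigl(\alpha_m+a_{n(m)}\sum_{t\in H_m}t\bigr)$ lies in $C$ for every $F\in\mathcal{P}_f(\N)$ and every function $n\colon F\to\N$. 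From this one extracts $w_1$ (essentially a $\sum_{t\in H_{m_0}}t$, combined with neighbouring $\alpha$-terms to ensure $w_1\in C$) and the remaining $w_k$ as carefully indexed $\alpha_m$'s; the bookkeeping ensures that each required sum $a_n w_1+w_2+\cdots+w_{n+1}$ matches an output of the Central Sets Theorem.

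The main obstacle is the coordination of $w_1$: we need a single $w_1\in C$ such that the infinitely many translates $-a_n w_1+C$ all lie in $p$, and since ultrafilters are not closed under countable intersections, one cannot just intersect the sets $\{w : -a_n w + C\in p\}$. This is precisely what the strong Central Sets Theorem (or an equivalent dynamical ingredient about how multiplication by $a_n$ interacts with the minimal ideal of $\beta\N$) is designed to circumvent. Once $w_1$ is fixed, the construction of $w_2,w_3,\ldots$ reduces to an iterated Hindman argument inside $C\cap\bigcap_n(-a_n w_1+C)\in p$, which is routine.
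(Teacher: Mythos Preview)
The paper does not itself prove this theorem; it is quoted from \cite{bhl}, and all the paper tells you is that the argument there is a density argument, whose key ingredients it reproduces as \lem{symmetric}, \lem{translated} and \lem{ap}. So your ultrafilter/Central Sets approach is certainly different in spirit from what \cite{bhl} does.

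But your sketch has a real gap at exactly the point you flag as the main obstacle, and you do not close it. The strong Central Sets Theorem outputs sums of the shape $\sum_{m\in F}\bigl(\alpha_m+\sum_{t\in H_m}f(m)(t)\bigr)$. To force $w_1\in C$ you propose to combine $\sum_{t\in H_{m_0}}t$ with ``neighbouring $\alpha$-terms''; but once an $\alpha$-term is absorbed into $w_1$, the quantity $a_n w_1$ acquires a summand $a_n\alpha_m$, which is \emph{not} of the form permitted in a Central Sets sum, so $a_n w_1+w_2+\cdots+w_{n+1}$ no longer matches any output of the theorem. Your fallback in the final sentence, working inside $C\cap\bigcap_n(-a_n w_1+C)\in p$, is precisely the countable intersection you correctly ruled out one sentence earlier: even if every $-a_n w_1+C$ lies in $p$, their intersection need not, so this set may fail to be in $p$ (indeed may be empty).

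This is the substantive difficulty, not bookkeeping. The density method of \cite{bhl} was introduced precisely because no ultrafilter or Central Sets argument was known to produce a single $y$ that works simultaneously for the infinitely many coefficients $a_n$; your invocation of the strong Central Sets Theorem does not supply one either.
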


In \sect{nearmiss} we show that \thm{main} can almost be deduced directly from \thm{bhl}.  The problem we encounter motivates the proof of \thm{main} that appears in \sect{main}.

\section{A near miss} \label{sec:nearmiss}

In this section we show that \thm{main} can almost be deduced directly from \thm{bhl}.

Let $\N$ be finitely coloured.  By \thm{bhl} there is a monochromatic solution to the system of equations
\begin{align*}
                         \tilde x_{11} -   a_1 y & = z_1      \\
         \tilde x_{21} + \tilde x_{22} - 2 a_2 y & = z_2      \\
                                                 & \vvdots{=} \\
\tilde x_{n1} + \cdots + \tilde x_{nn} - n a_n y & = z_n      \\
                                                 & \vvdots{=}
\end{align*}
Set $x_{ni} = \tilde x_{ni} - a_n y$.  Then, for each $n$ and $i$,
\[
 x_{n1} + \cdots + x_{nn} = \tilde x_{n1} + \cdots + \tilde x_{nn} - n a_n y = z_n,
\]
and
\[
 x_{ni} + a_n y = \tilde x_{ni},
\]
so we have found a monochromatic image for \sys{new}.  The problem is that we have not ensured that the variables $x_{ni} = \tilde x_{ni} - a_n y$ are positive.  In \sect{main} we look inside the proof of \thm{bhl} to show that we can take (most of) the $x_{ni}$ to be as large as we please.

\section{Proof of \thm{main}} \label{sec:main}

The proof of \thm{bhl} used a density argument.  The (\defn{upper}) \defn{density} of a set $S\subseteq \N$ is
\[
 d(S) = \limsup_{n \to \infty} \frac{|S \cap [n]|}{n},
\]
where $[n] = \{1,2,\ldots,n\}$.  The density of a set $S \subseteq \Z$ is $d(S \cap \N)$.  We call $S$ \defn{dense} if $d(S) > 0$.  We shall use three properties of density.
\begin{enumerate}
 \item If $A \subseteq B$, then $d(A) \leq d(B)$.
 \item Density is unaffected by translation and the addition or removal of finitely many elements.
 \item Whenever $\N$ is finitely coloured, at least one of the colour classes is dense.
\end{enumerate}

We will also use the standard notation for sumsets and difference sets
\begin{align*}
 A + B & = \{a + b : a \in A, b \in B\} \\
 A - B & = \{a - b : a \in A, b \in B\} \\
      kA & = \underbrace{A + \cdots + A}_{k \text{ times}},
\end{align*}
and write $m \cdot S=\{ms:s\in S\}$ for the set obtained from $S$ under pointwise multiplication by $m$.

We start with two lemmas from \cite{bhl}.

\begin{lemma}[\cite{bhl}] \label{lem:symmetric}
 Let $A \subseteq \N$ be dense.  Then there is an $m$ such that, for $n \geq 2/d(A)$, $nA-nA = m \cdot \Z$.
\end{lemma}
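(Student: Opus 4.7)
The first step is to identify the modulus $m$. The sequence of sets $nA - nA$ is nested in $n$: given a witness $x = \sum_{i=1}^n a_i - \sum_{i=1}^n b_i$, adjoining any fixed $a \in A$ to both sums exhibits $x \in (n+1)A - (n+1)A$. The union $\bigcup_n (nA - nA)$ is thus closed under negation (swap the two sums) and under addition (concatenate witnesses), hence is a subgroup of $\Z$, so it equals $m \cdot \Z$ for a unique $m \geq 1$ (positive since $A$ is infinite). By construction $nA - nA \subseteq m \cdot \Z$ for every $n$, and the content of the lemma is the reverse inclusion for $n \geq 2/d(A)$.

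Next I would reduce to the case $m = 1$. Fixing $a_0 \in A$, one has $A - a_0 \subseteq m \cdot \Z$, so $A' := (A - a_0)/m$ is a subset of $\N \cup \{0\}$ with $\gcd(A' - A') = 1$ and upper density $d(A') = m \cdot d(A)$ (since $A$ sits in the arithmetic progression $a_0 + m \cdot \Z$ of density $1/m$, and the rescaling identifies this progression with $\N_0$). The identity $nA - nA = m(nA' - nA')$ together with the inequality $2/d(A') = 2/(m \, d(A)) \leq 2/d(A)$ reduces the problem to showing that $nA' - nA' = \Z$ whenever $n \geq 2/d(A')$.

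Now fix $c \in \Z$. Along a subsequence witnessing the upper density of $A'$, choose $N > |c|$ with $|A' \cap [0, N]| \geq (d - \varepsilon)(N+1)$, where $d = d(A')$. Write $B = A' \cap [0, N]$. The strategy is to show that the $n$-fold sumset $nB$ is so dense in $[0, nN]$ that $nB$ and $c + nB$ must overlap, producing a representation $c = \sum a_i - \sum b_i$ with $a_i, b_i \in B \subseteq A'$. Concretely, one aims for $|nB| > (nN + |c|)/2$, which by pigeonhole forces the overlap inside an interval of length at most $nN + |c|$.

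The main obstacle is the sumset cardinality bound. The elementary estimate $|nB| \geq n|B| - (n - 1)$, tight when $B$ is an arithmetic progression, only gives density close to $d$ inside $[0, nN]$, which is insufficient for $d < 1/2$. The improvement must exploit $\gcd(B - B) = 1$, which holds once $N$ is large enough by the reduction to $m = 1$: for sets of positive density in $[0, N]$ not contained in a proper sub-progression, iterated sumset bounds in the Mann/Freiman style force $|nB|/(nN) \to 1$ rapidly in $n$, with the crossover past $1/2$ occurring at $n$ of order $1/d$. The threshold $n \geq 2/d(A)$ provides comfortable slack; verifying this threshold quantitatively is the technical heart of the argument.
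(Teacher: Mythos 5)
Your soft steps are correct: the sets $nA-nA$ are nested in $n$, their union is the subgroup of $\Z$ generated by $A-A$, hence equals $m \cdot \Z$ for some $m \geq 1$; and the translation--rescaling $A' = (A-a_0)/m$ (take $a_0 = \min A$ so that $A' \subseteq \N \cup \{0\}$) does satisfy $d(A') = m\,d(A)$ and $\gcd(A'-A')=1$, with the threshold moving the right way because $2/d(A') \leq 2/d(A)$. The pigeonhole framing is also fine: with $B = A' \cap [0,N]$, the sets $nB$ and $c+nB$ lie in a common interval of $nN+|c|+1$ integers, so $|nB| > (nN+|c|+1)/2$ forces them to meet. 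But the argument stops exactly where the lemma begins: you never establish the lower bound $|nB| > (nN+|c|)/2$ for $n \geq 2/d$, and your closing sentence concedes that this is the technical heart. The appeal to ``Mann/Freiman style'' bounds does not close the gap. Mann's theorem is about Schnirelmann density and is vacuous here, since $B$ may omit an initial segment of $[0,N]$, so even after translating $\min B$ to $0$ its Schnirelmann density can be $0$; and generic iterated sumset inequalities give only $|nB| \geq n(|B|-1)+1 \approx ndN$, i.e.\ relative density about $d$ in $[0,nN]$ --- precisely the insufficiency you yourself point out. What is actually needed is a genuine filling theorem for iterated sumsets of $\gcd$-$1$ sets in $[0,N]$ (Lev's sharp bounds on $|nB|$, or S\'ark\"ozy-type results on long progressions in $nB$), and the near-extremal examples --- $B$ essentially an arithmetic progression of difference $q \approx 1/d$ together with a single point off the progression, for which $|nB| \approx nN - N/(2d)$ --- show that the crossover past density $1/2$ occurs only around $n \approx 1/d$. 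So the constant in the $2/d$ threshold must be tracked exactly; no soft slack argument substitutes for this, and as written the proposal is not a proof.

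For comparison: the paper itself gives no proof of this statement --- it quotes it from \cite{bhl} --- so there is no in-paper argument to measure your route against. But the way \lem{translated} is deployed in the proof of \lem{new}, namely as the stabilization identity $(A-a)-n(A-a) = (A-a)-(n+1)(A-a)$ for $n \geq 2/d$, suggests that the source's argument is soft rather than quantitative: one shows directly that $nA-nA = (n+1)A-(n+1)A$ once $n \geq 2/d(A)$, after which the common value must equal the union of the nested sequence, i.e.\ $m \cdot \Z$, since stabilization at one step propagates to all later steps ($D_{n+1}=D_n$ implies $D_{n+2}=D_{n+1}$, writing $D_n = nA-nA$). If you want to complete your write-up, either import a precise statement of Lev's theorem and verify the $2/d$ arithmetic including the $\varepsilon$ and $|c|$ losses, or replace the sumset-growth step by such a stabilization argument.
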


\begin{lemma}[\cite{bhl}] \label{lem:translated}
 Let $S \subseteq \Z$ be dense with $0 \in S$.  Then there is an $X \subseteq \Z$ such that,
for $n\geq 2/d(S)$, we have
$S-nS = X$.
\end{lemma}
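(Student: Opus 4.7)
My plan is to exhibit an explicit set $X$ equal to $S - nS$ for every $n \geq 2/d(S)$, making $n$-independence manifest.

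Applying \lem{symmetric}, I obtain a positive integer $m$ with $nS - nS = m \cdot \Z$ for all $n \geq 2/d(S)$. Since $0 \in S$ I have $0 \in nS$ and $S \subseteq nS$: the first gives $nS \subseteq nS - nS = m \cdot \Z$, while the second combined with it forces $S - S \subseteq m \cdot \Z$ and then $S \subseteq m \cdot \Z$ (using $0 \in S$ once more). Consequently $S - nS \subseteq m \cdot \Z$, which suggests taking $X = m \cdot \Z$.

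For the reverse inclusion, given any $km \in m \cdot \Z$, I want $s \in S$ and $u \in nS$ with $s - u = km$. Choosing $s \in S$ very large (possible since $S$ is infinite), the quantity $u = s - km$ is a large positive multiple of $m$, and the task reduces to showing $u \in nS$. So the crux of the proof is a single structural fact: for $n \geq 2/d(S)$, the set $nS$ contains every sufficiently large element of $m \cdot \Z \cap \N$.

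The main obstacle is exactly this structural fact. I expect it follows from the same density machinery underlying \lem{symmetric}: since $nS \subseteq m \cdot \Z$ has positive density (inherited from $S$) and $nS - nS$ is all of $m \cdot \Z$, a pigeonhole argument modulo $m$, or equivalently sliding $nS$ along itself by elements of $S$, should force $nS$ to cover all but a bounded initial piece of $m \cdot \Z \cap \N$. Granted this, every $km \in m \cdot \Z$ admits a representation $s - u$ with $s \in S$ chosen beyond the threshold and $u \in nS$, giving $S - nS = m \cdot \Z$ uniformly in $n \geq 2/d(S)$.
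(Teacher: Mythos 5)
There is a genuine gap, and it sits exactly where you flagged it. The ``single structural fact'' your proof reduces to --- that for $n \geq 2/d(S)$ the sumset $nS$ contains every sufficiently large element of $m \cdot \Z \cap \N$ --- is not just unproven but false, because $d$ is an \emph{upper} density (a $\limsup$). Take $S = \{0\} \cup \bigcup_{j \geq 1} [N_j, 2N_j)$ with $N_{j+1} = 100 N_j$. Then $d(S) \geq 1/2$, so $n = 4 \geq 2/d(S)$ is admissible, and here $m = 1$, since $4S - 4S$ contains $[-(N_j-1), N_j-1]$ for every $j$. But every nonzero element of $4S$ is a sum of at most four positive elements of $S$: if all summands come from blocks up to the $j$th the sum is less than $8N_j$, while if any summand comes from a later block the sum is at least $N_{j+1}$. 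Hence $4S \cap [8N_j, N_{j+1}) = \emptyset$ for every $j$, so $4S$ has unboundedly long gaps in $\N$ and omits infinitely many multiples of $m$. No pigeonhole modulo $m$ can repair this: the obstruction is archimedean (gaps), not modular. Note that the lemma's conclusion still holds for this $S$ (indeed $S - 4S \supseteq S - S = \Z$), so it is specifically your route, not the statement, that breaks. A secondary issue: you invoke \lem{symmetric} for $S \subseteq \Z$, but it is stated only for subsets of $\N$; since $S$ may contain $0$ and negative integers, the identity $nS - nS = m \cdot \Z$ needs its own justification (e.g.\ via $S \cap \N$) before your inclusion $S - nS \subseteq m \cdot \Z$ can even get started.

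There is also a structural inversion worth understanding. The identification $X = m \cdot \Z$ is in fact correct when $0 \in S$, but in this paper it is a \emph{consequence} of \lem{translated}, not a route to it: in the proof of \lem{new}, the stabilization $S - nS = S - (n+1)S$ is precisely what yields $X = X - S$, hence $X = X + m \cdot \Z$, from which the coset structure of $X$ is read off. The paper cites \cite{bhl} for \lem{translated} rather than reproving it, and the argument there establishes exactly the stabilization: the inclusion $S - nS \subseteq S - (n+1)S$ is immediate from $0 \in S$, and the reverse inclusion for $n \geq 2/d(S)$ comes from a density/pigeonhole argument on translates of $S$ --- at no point does one need (or have) cofiniteness of $nS$ in $m \cdot \Z \cap \N$. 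To salvage your approach you would have to replace the uniform covering claim by the existential one, ``for each $g \in m \cdot \Z$ there exist $s \in S$ and $u \in nS$ with $s - u = g$,'' choosing $s$ adapted to $g$; but proving that directly is essentially the lemma itself, so nothing has been gained.
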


The following consequence of Lemmas~\ref{lem:symmetric} and~\ref{lem:translated} is mostly implicit in \cite{bhl}.  The main new observation is that the result still holds if we insist that we only use large elements of $A$.  Write $A_{>t} = \{a \in A: a > t\}$.

\begin{lemma} \label{lem:new}
 Let $A$ be a dense subset of $\N$ that meets every subgroup of $\Z$, and let $m$ be the least common multiple of $1, 2, \ldots, \lfloor 1/d(A) \rfloor$.  Then, for $n \geq 2/d(A)$ and any $t$,
 \[
  A_{>t} - nA_{>t} \supseteq m \cdot \Z.
 \]
\end{lemma}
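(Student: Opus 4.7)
My plan is first to reduce to the case $t = 0$.  The set $A_{>t}$ has the same upper density as $A$, and still meets every nontrivial subgroup of $\Z$: since $A$ meets $jk\Z$ for every positive $j$, and the positive elements of $jk\Z$ are at least $jk$, the intersection $A \cap k\Z$ is unbounded, so $A_{>t} \cap k\Z \neq \emptyset$ for every $k$.  It therefore suffices to prove that $A - nA \supseteq m\Z$ whenever $A \subseteq \N$ is dense and meets every subgroup, for every $n \geq 2/d(A)$.  Applying \lem{symmetric} gives $nA - nA = m_A\Z$ for some $m_A$; because $A$ lies in a single residue class modulo $m_A$ yet meets $m_A\Z$, in fact $A \subseteq m_A\Z$.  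This forces $d(A) \leq 1/m_A$, so $m_A \leq \lfloor 1/d(A) \rfloor$ and $m_A$ divides $m$.

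The main step is to show $A - nA = m_A\Z$.  Choose $a_0 \in A$ with $a_0 > \min A$ (possible since $A$ is infinite) and set $S = A - a_0$, so that $0 \in S$, $d(S) = d(A)$, $S$ has elements of both signs, and $\gcd(S) = \gcd(A - A) = m_A$.  By \lem{translated}, $X := S - nS$ is a fixed subset of $\Z$ for every $n \geq 2/d(A)$, and a direct computation gives $A - nA = X - (n-1)a_0$; so it suffices to show $X = m_A\Z$.  Combining the stability $S - (n+1)S = X$ with the obvious inclusion $(S - nS) - S \subseteq S - (n+1)S$ yields $X - S \subseteq X$, and iterating gives $X - \Sigma \subseteq X$, where $\Sigma$ denotes the additive semigroup generated by $S$.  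Since $0 \in X$, we deduce $-\Sigma \subseteq X$.

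It remains to verify $\Sigma = m_A\Z$.  The inclusion $\Sigma \subseteq m_A\Z$ is immediate.  For the reverse, let $S_+$ and $S_-$ be the positive and negative elements of $S$; since $\gcd(\gcd(S_+), \gcd(|S_-|)) = \gcd(S) = m_A$, a Bezout-type argument---writing any target $t \in m_A\Z$ as a non-negative integer combination of elements of $S_+$ plus a non-negative integer combination of elements of $S_-$, with the coefficients shifted up to be as large as needed---shows $\Sigma = m_A\Z$.  Hence $X \supseteq -\Sigma = m_A\Z$, and combined with $X \subseteq m_A\Z$ we get $X = m_A\Z$, giving $A - nA = m_A\Z \supseteq m\Z$.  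The main obstacle is this semigroup equality: both the sign condition on $S$ (guaranteed by $a_0 > \min A$) and the $\gcd$ condition (from $A$ meeting every subgroup) are essential, and only then does the Bezout combination go through.
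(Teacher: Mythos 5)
Your proof is correct, and it closes the key step by a genuinely different mechanism from the paper's. Both arguments extract the same stability from \lem{translated}, but the paper varies the base point: applying that lemma to $A_{>t}-a$ for every $a \in A_{>t}$ gives $X = X + A_{>t} - A_{>t}$, hence $X = X + l(A_{>t}-A_{>t})$ for all $l$, and then \lem{symmetric} is invoked to identify $l(A_{>t}-A_{>t}) = m_t \cdot \Z$, so that $X$ is a union of cosets of $m_t \cdot \Z$; the subgroup hypothesis supplies arbitrarily large multiples of $m_t$ in $A$, which locates the coset $m_t \cdot \Z$ inside $X$, and the bound $m_t \leq 1/d(A)$ comes from $lA_{>t} - lA_{>t}$ containing a translate of $A_{>t}$. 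You instead fix a single translate $S = A - a_0$, extract only $X - S \subseteq X$, and do the group generation by hand: since $S$ has elements of both signs (your choice $a_0 > \min A$) and $\gcd(S) = m_A$, a Frobenius-type representation argument shows the semigroup $\Sigma$ equals $m_A\Z$, whence $m_A\Z = -\Sigma \subseteq X$. You use \lem{symmetric} only once, to define $m_A$, and your observation that $nA - nA = m_A\Z$ confines $A$ to a single residue class modulo $m_A$ (pad a difference $a - a'$ with $n-1$ common summands to see $A - A \subseteq m_A\Z$), so the subgroup hypothesis forces $A \subseteq m_A\Z$; this yields a cleaner density bound $d(A) \leq 1/m_A$ than the paper's translate argument, and in fact gives the exact equality $A - nA = m_A\Z$ rather than containment of one coset. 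The trade-off: the paper's route is shorter because \lem{symmetric} does the group generation as a black box, while yours is more elementary and self-contained at that step, at the cost of two sketched but standard verifications --- the residue-class claim above, and the Bezout step, which does go through since one may pass to finite subsets of $S_+$ and $S_-$ realising the gcds and then shift coefficients by zero-sum relations such as $(-q)\cdot p + p \cdot q = 0$. Your reduction to $t = 0$ is also sound: $d(A_{>t}) = d(A)$, and $A \cap k\Z$ is unbounded for every $k$, so $A_{>t}$ inherits the hypotheses with the same value of $m$.
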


\begin{proof}
First observe that, for any $t$, $d(A_{>t}) = d(A)$.  Let $n \geq 2/d(A)$, and
let $X=A_{>t} - nA_{>t}$. For any $a \in A_{>t}$, we have by \lem{translated} that
\[
(A_{>t}-a) - n(A_{>t}-a) = (A_{>t}-a)-(n+1)(A_{>t}-a),
\]
and so
\[
X=X-A_{>t}+a.
\]
Since $a \in A_{>t}$ was arbitrary it follows that
$X=X+A_{>t}-A_{>t}$, whence $X=X+l(A_{>t}-A_{>t})$ for all $l$. By \lem{symmetric} there is an $m_t \in \Z$ such that, for $l \geq 2/d(A)$, $l(A_{>t}-A_{>t}) = m_t \cdot \Z$.  Hence $X = X + m_t \cdot \Z$, and $X$ is a union of cosets of $m_t \cdot \Z$.  Since $A$ contains arbitrarily large multiples of $m_t$, one of these cosets is $m_t \cdot \Z$ itself.

 Since $lA_{>t} - lA_{>t}$ contains a translate of $A_{>t}$,
 \[
  1/m_t = d(m_t \cdot \Z) \geq d(A),
 \]
 and $m_t \leq 1/d(A)$.  So $m_t$ divides $m$ and
 \[
  A_{>t} - nA_{>t} \supseteq m \cdot \Z. \qedhere
 \]
\end{proof}

\lem{new} will allow us to find a monochromatic image for all but a finite part of \sys{new}.  The remaining finite part can be handled using Rado's theorem, provided we take care to ensure that it gives us a solution inside a dense colour class.

\begin{lemma}[\cite{bhl}] \label{lem:ap}
 Let $\N$ be finitely coloured.  For any $l\in\N$, there is a $c \in \N$ such that $c \cdot [l]$ is disjoint from the non-dense colour classes.
\end{lemma}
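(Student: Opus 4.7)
The plan is to reduce the statement to the fact that a finite union of upper-density-zero sets has upper density zero. Let $N$ denote the union of the non-dense colour classes. Since there are only finitely many of them and each satisfies $d=0$, subadditivity of upper density (immediate from $|(A\cup B)\cap[n]|\leq|A\cap[n]|+|B\cap[n]|$) gives $d(N)=0$, so that $|N\cap[n]|/n\to 0$.

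The next step is to transfer this to the $l$ ``pullbacks'' $B_i=\{c\in\N:ic\in N\}$ for $i\in[l]$. The map $c\mapsto ic$ injects $B_i\cap[n]$ into $N\cap[in]$, so $|B_i\cap[n]|/n\leq i\cdot|N\cap[in]|/(in)\to 0$, and each $B_i$ also has density zero. Subadditivity once more forces $B_1\cup\cdots\cup B_l$ to be a proper subset of $\N$, and any $c$ outside this union satisfies $ic\notin N$ for every $i\in[l]$, which is precisely the assertion that $c\cdot[l]$ is disjoint from every non-dense colour class.

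I do not expect a genuine obstacle: the argument uses only the three properties of density already listed in the paper together with the elementary subadditivity of counting. The one thing worth keeping in mind is that ``non-dense'' must be read as $d=0$ (as implicit in the paper's definition of ``dense''), so that subadditivity really does collapse the finitely many non-dense classes into a single density-zero set $N$ before the pullback step.
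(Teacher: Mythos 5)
Your proof is correct: the union $N$ of the non-dense colour classes has upper density zero by subadditivity, each pullback $B_i=\{c : ic\in N\}$ inherits density zero via the injection $c\mapsto ic$ (with the factor $i$ absorbed as you indicate), and so some $c$ lies outside $B_1\cup\dots\cup B_l$, which is exactly the claim; your reading of ``non-dense'' as $d=0$ is also the right one given the paper's definition of dense. Note that the paper itself states this lemma as a citation to \cite{bhl} and gives no proof, so there is nothing in-paper to diverge from --- your argument is the standard one and uses only the density properties the paper lists.
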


We can now show that \sys{new} is partition regular.

\begin{proof}[Proof of \thm{main}]
 Let $\N$ be $r$-coloured.  Suppose first that some colour class does not meet every subgroup of $\Z$; say some class contains no multiple of $m$.  Then $m \cdot \N$ is $(r-1)$-coloured by the remaining colour classes, so by induction on $r$ we can find a monochromatic image.  So we may assume that every colour class meets every subgroup of $\Z$.

Let $d$ be the least density among the dense colour classes, and let $m$ be the least common multiple of $1, 2, \ldots, \lfloor 1/d \rfloor$.  Then for any dense colour class $A$, any $t$ and $n \geq 2/d$,
\[
 A_{>t} - nA_{>t} \supseteq m \cdot \Z.
\]

Now let $N = \lceil 2/d \rceil - 1$.  We will find a monochromatic image for the the expressions containing only $y$ and $x_{ni}$ for $n \leq N$ using Rado's theorem.  Indeed, consider the following system of linear equations.

\begin{align} \label{sys:finite}
 u_1 & = x_{11}                   &  v_{11} & = x_{11} + a_1 y  & y, \nonumber \\
                                                      \nonumber \\
 u_2 & = x_{21} + x_{22}          &  v_{21} & = x_{21} + a_2 y  &    \nonumber \\
 &        &                         v_{22} & = x_{22} + a_2 y  \nonumber \\
 &     &        & \vvdots{=}                        \\
 u_N & = x_{N1} + \cdots + x_{NN} &  v_{N1} & = x_{N1} + a_N y  &    \nonumber \\
        &                   &         & \vvdots{=}  &    \nonumber \\
        &                   &   v_{NN} & = x_{NN} + a_N y  &   \nonumber
\end{align}

The matrix corresponding to these equations has the form
\[
 \begin{pmatrix}
  B & -I
 \end{pmatrix}
\]
where $B$ is a top-left corner of the matrix corresponding to \sys{new} and $I$ is an appropriately sized identity matrix.  It is easy to check that this matrix has the columns property, so by Rado's theorem there is an $l$ such that, whenever a progression $c \cdot [l]$ is $r$-coloured, it contains a monochromatic solution to \sys{finite}.

Apply \lem{ap} to get $c$ with $c \cdot [ml]$ disjoint from the non-dense colour classes.  Then $mc \cdot [l] \subseteq c \cdot [ml]$ is also disjoint from the non-dense colour classes, and by the choice of $l$ there is a dense colour class $A$ such that $A \cap \left(md \cdot [l]\right)$ contains a solution to \sys{finite}.  Since the  $u_n$, $v_{ni}$ and $y$ are all in $A$, $y$ and the corresponding $x_{ni}$ make the first part of \sys{new} monochromatic.

Now $y$ is divisible by $m$, so for $n > N$ we have that
\[
 -n a_n y \in A_{>a_ny} - nA_{>a_ny},
\]
so there are $\tilde x_{ni}$ and $z_n$ in $A_{>a_ny}$ such that
\[
 -a_n y = z_n - \tilde x_{n1} - \cdots - \tilde x_{nn}.
\]
Set $x_{ni} = \tilde x_{ni} - a_n y$.  Then
\[
 x_{n1} + \cdots + x_{nn} = \tilde x_{n1} + \cdots + \tilde x_{nn} - n a_n y = z_n,
\]
and
\[
 x_{ni} + a_n y = \tilde x_{ni},
\]
for each $n > N$ and $1 \leq i \leq n$.  Since $\tilde x_{ni}$ and $z_n$ are in $A$ it follows that the whole of \sys{new} is monochromatic.

It remains only to check that all of the variables are positive.  But for $y$ and $x_{ni}$ with $n \leq N$ this is guaranteed by Rado's theorem; for $n > N$ it holds because $\tilde x_{ni} > a_n y$.
\end{proof}

\bibliography{dh}{}
\bibliographystyle{alpha}

\end{document}